\newtheorem*{theoA}{Theorem A}
\newtheorem{theo}{Theorem}[section]
\newtheorem{lem}{Lemma}[section]
\newtheorem{defi}{Definition}[section]
\newcommand{\ol}{\overline}
\newcommand{\be}{\begin{equation}}
\newcommand{\ee}{\end{equation}}
\newcommand{\beas}{\begin{eqnarray*}}
\newcommand{\eeas}{\end{eqnarray*}}
\newcommand{\bea}{\begin{eqnarray}}
\newcommand{\eea}{\end{eqnarray}}
\newcommand{\lra}{\longrightarrow}
\numberwithin{equation}{section}
\renewcommand{\vline}{\mid}
\numberwithin {equation}{section}
\numberwithin {lem}{section}
\numberwithin {theo}{section}
\numberwithin {defi}{section}
\numberwithin {rem}{section}
\begin{document}
\title[A new type of unique range set with deficient values] {A new type of unique range set with deficient values}
\date{}
\author[A. Banerjee and B. Chakraborty ]{ Abhijit Banerjee * and Bikash Chakraborty. }
\address{Department of Mathematics, University of Kalyani, West Bengal 741235, India.}
\email{abanerjee\_kal@yahoo.co.in, abanerjee\_kal@rediffmail.com}
\email{bikashchakraborty.math@yahoo.com, bikashchakrabortyy@gmail.com.}
\let\thefootnote\relax
\footnotetext{2000 Mathematics Subject Classification: 30D35.}
\keywords{Meromorphic functions, unique range set, weighted sharing, shared set.}
\thanks{Type set by \AmS -\LaTeX}
\thanks{$^{*}$ The first author is thankful to DST-PURSE programme for financial assistance.}
\begin{abstract} In the paper, we introduce a new type of unique range set for meromorphic function having deficient values which will improve all the previous result in this aspect. \end{abstract}
\maketitle
\section{Introduction Definitions and Results}
In this paper by meromorphic functions we will always mean meromorphic functions in the complex plane. It will be convenient to let $E$ denote any set of positive real numbers of finite linear measure, not necessarily the same at each occurrence. For any non-constant meromorphic function $h(z)$ we denote by  $S(r,h)$ any quantity satisfying
$$S(r,h)=o(T(r,h))\;\;\;\;\;( r\lra \infty, r\not\in E).$$

We denote by $T(r)$ the maximum of $T(r,f)$ and $T(r,g)$. The notation $S(r)$ denotes any quantity satisfying $S(r)=o(T(r))$ as $r\lra \infty$, $r\not\in E$.
Throughout this paper, we denote \beas\Theta(a,f) = 1- \displaystyle\limsup_{r\longrightarrow \infty }\frac{\ol N(r,a;f)}{T(r,f)},\eeas where $a$ is a value in the extended complex plane. \par
We adopt the standard notations of the Nevanlinna theory of meromorphic functions as explained in \cite{6}.

Let $f$ and $g$ be two non-constant meromorphic functions and let $a$ be a finite complex number. We say that $f$ and $g$ share the value $a$ CM (counting multiplicities), provided that $f-a$ and $g-a$ have the same zeros with the same multiplicities. Similarly, we say that $f$ and $g$ share the value $a$-IM (ignoring multiplicities), provided that $f-a$ and $g-a$ have the same set of zeros, where the multiplicities are not taken into account.
In addition we say that $f$ and $g$ share $\infty$ CM (IM), if $1/f$ and $1/g$ share $0$ CM (IM).

Let $S$ be a set of distinct elements of $\mathbb{C}\cup\{\infty\}$ and $E_{f}(S)=\bigcup_{a\in S}\{z: f(z)=a\}$, where each zero is counted according to its multiplicity. If we do not count the multiplicity, then the set $ \bigcup_{a\in S}\{z: f(z)=a\}$ is denoted by $\ol E_{f}(S)$.
If $E_{f}(S)=E_{g}(S)$ we say that $f$ and $g$ share the set $S$ CM. On the other hand, if $\ol E_{f}(S)=\ol E_{g}(S)$, we say that $f$ and $g$ share the set $S$ IM. Evidently, if $S$ contains only one element, then it coincides with the usual definition of CM (respectively, IM) sharing of values.

In continuation with the famous question proposed in \cite{4}, in 1982, F.Gross and C.C.Yang \cite{5} first introduced the novel idea of unique range set for entire function. The analogous definition for meromorphic functions can also be given in similar fashion. Below we are recalling the same. \par
Let a set $S\subset \mathbb{C} $ and $f$ and $g$ be two non-constant meromorphic (entire) functions. If $E_{f}(S)=E_{g}(S)$ implies $f\equiv g$ then $S$ is called a unique range set for meromorphic (entire) functions or in brief URSM (URSE). \par
In 1997, Yi \cite{15a} introduced the analogous definition for reduced unique range sets. We shall call any set $S\subset \mathbb{C} $ a unique range set for meromorphic (entire) functions ignoring multiplicity (URSM-IM) (URSE-IM) or a reduced unique range set for meromorphic (entire) functions (RURSM) (RURSE) if $\ol E_{f}(S)=\ol E_{g}(S)$ implies $f\equiv g$ for any pair of non-constant meromorphic (entire) functions.

During the last few years the notion of unique as well as reduced unique range sets have been generating an increasing interest among the researchers and naturally a new area of research have been developed under the aegis of uniqueness theory.
The prime concern of the researchers is to find new unique range sets or to make the cardinalities of the existing range sets as small as possible imposing sum restrictions on the deficiencies of the generating meromorphic functions.
To see the remarkable progress in this regard readers can make a glance to \cite{1}, \cite{1c}-\cite{1e}, \cite{3}, \cite{9a}-\cite{9b}, \cite{12}, \cite{14}-\cite{15}.
\par In 1994, H.X.Yi \cite{14} exhibited a URSE with $15$ elements and in 1995 P.Li and C.C.Yang \cite{12} exhibited a URSM with $15$ elements and a URSE with $7$ elements.
Till date the URSM with $11$ elements and R-URSM with $17$ elements are the smallest available URSM and R-URSM obtained by Frank-Reinders \cite{3} and Bartels \cite{1e} respectively. This URSM by Frank of Reinders is highlighted by a number of researchers.  \par
In 1995, Li and Yang \cite{12} first elucidated the fact that the finite URSM's are nothing but the set of distinct zeros of some suitable polynomials and subsequently the study of the characteristics of these underlying polynomials should also be given utmost priority.

Li and Yang \cite{12}, called a polynomial $P$ in $\mathbb{C}$, as uniqueness polynomial for meromorphic (entire) functions, if for any two non-constant meromorphic (entire) functions $f$ and $g$, $P(f)\equiv P(g)$ implies $f\equiv g$.
We say $P$ is a UPM (UPE) in brief. \par
On the other hand, T. T. H. An, J. T. Wang and P. Wong \cite{1a} called a polynomial $P$ in $\mathbb{C}$ as strong uniqueness polynomial
for meromorphic (entire) functions if for any non-constant meromorphic (entire) functions $f$ and $g$, $P(f)\equiv cP(g)$ implies $f\equiv g$, where $c$ is a suitable nonzero constant. In this case we say $P$ is SUPM (SUPE) in brief.\par

In 2000, H. Fujimoto \cite{2} first discovered a special property of a polynomial, which was recently termed as critical injection property in \cite{1b}.
Critical injection property of a polynomial may be stated as follows : A polynomial $P$ is said to satisfy critical injection property if $P(\alpha )\not =P(\beta )$ for any two distinct zeros $\alpha $, $\beta $ of the derivative $P'$.\par
Clearly the inner meaning of critical injection property is that the polynomial $P$ is injective on the set of distinct zeros of $P^{'}$, which are known as critical points of $P$. Naturally a polynomial with this property may be called a critically injective polynomial.
Let $P(z)$ be a monic polynomial without multiple zero whose derivatives has mutually distinct $k$ zeros given by $d_{1}, d_{2}, \ldots, d_{k}$ with multiplicities $q_{1}, q_{2}, \ldots, q_{k}$ respectively. The following theorem of Fujimoto helps us to find many uniqueness polynomials.
\par
\begin{theoA}\cite{2a} Suppose that $P(z)$ is critically injective.  Then $P(z)$ will be a uniqueness polynomial if and only if $$\sum \limits_{1\leq l<m\leq k}q_{_{l}}q_{m}>\sum \limits_{l=1}^{k} q_{_{l}}.$$
In particular the above inequality is always satisfied whenever $k\geq 4$. When $k=3$ and $\max \{q_{1},q_{2},q_{3}\}\geq 2$ or when $k=2$, $\min \{q_{1},q_{2}\}\geq 2$ and $q_{1}+q_{2}\geq 5$ then also the above inequality holds.\end{theoA}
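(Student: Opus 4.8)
The plan is to establish the substantive (sufficiency) direction by contradiction: assuming $P(f)\equiv P(g)$ for nonconstant meromorphic $f,g$ with $f\not\equiv g$, I would derive a violation of the stated inequality by means of the Second Main Theorem, and then dispose of the remaining assertions by elementary counting. First I would normalize growth. By the Valiron--Mokhonko lemma, $T(r,P(f))=n\,T(r,f)+S(r,f)$ and $T(r,P(g))=n\,T(r,g)+S(r,g)$, where $n=\deg P=1+\sum_{l=1}^{k}q_{l}$; since $P(f)\equiv P(g)$ this forces $T(r,f)=T(r,g)+S(r)$, so that $f$ and $g$ share the same order of growth and the error terms may be written uniformly as $S(r)$.

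The crux is a local analysis at the critical preimages, which is exactly where critical injectivity enters. Because $P$ has no multiple zero and $P'(z)=n\prod_{l}(z-d_{l})^{q_{l}}$, for each $l$ one has the monic factorization $P(w)-P(d_{l})=(w-d_{l})^{q_{l}+1}\prod_{i}(w-b_{l,i})$, and critical injectivity guarantees $P(d_{m})\neq P(d_{l})$ for $m\neq l$, so that none of the $b_{l,i}$ is a critical point and each is a \emph{simple} root. Now let $z_{0}$ be a zero of $f-d_{l}$ of multiplicity $p$. Then $P(f)-P(d_{l})$ vanishes to order $p(q_{l}+1)$ at $z_{0}$, hence so does $P(g)-P(d_{l})$. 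Reading off the factorization, either $g(z_{0})=d_{l}$ with the identical multiplicity $p$, or $g(z_{0})=b_{l,i}$ for some $i$ with multiplicity $p(q_{l}+1)\ge q_{l}+1\ge 2$. Thus every zero of $f-d_{l}$ at which $f$ and $g$ do \emph{not} coincide forces heavy ramification of $g$, contributing at least $q_{l}$ to $N(r,0;g')$; the symmetric statement holds with $f$ and $g$ interchanged.

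I would then combine these estimates. Writing $\overline N(r,d_{l};f)=\overline N^{(0)}_{l}(r)+\overline N^{(1)}_{l}(r)$, where $\overline N^{(0)}_{l}$ counts the points with $f=g=d_{l}$ and $\overline N^{(1)}_{l}$ the remaining ones, I would apply the Second Main Theorem to $f$ and to $g$ against the targets $d_{1},\dots,d_{k}$, in the ramified form that subtracts a term dominated by $N(r,0;f')$ (respectively $N(r,0;g')$), and feed in the local analysis: each non-coincident preimage of $d_{l}$ is charged, with weight $q_{l}$, to the ramification of the \emph{other} function. Summing the two inequalities and collecting the weights $q_{l}$ over all pairs $(l,m)$ produces the symmetric combination $\sum_{1\le l<m\le k}q_{l}q_{m}$ on one side against $\sum_{l}q_{l}$ on the other; dividing by $T(r)$ and letting $r\to\infty$ outside $E$, the hypothesis $f\not\equiv g$ can survive only if $\sum_{l<m}q_{l}q_{m}\le\sum_{l}q_{l}$. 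Hence the strict inequality forces $f\equiv g$. I expect the main obstacle to be precisely this bookkeeping: arranging that each non-coincident critical preimage is charged exactly once and with the correct weight $q_{l}$, handling the value $\infty$ (poles) correctly in the Second Main Theorem since $P$ is a polynomial, and checking that every remainder is genuinely $S(r)$ under the growth normalization of the first step.

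The \emph{in particular} clauses are then elementary arithmetic in the $q_{l}$. Writing $\sigma_{1}=\sum_{l}q_{l}$ and $\sigma_{2}=\sum_{l<m}q_{l}q_{m}=\tfrac12(\sigma_{1}^{2}-\sum_{l}q_{l}^{2})$, for $k\ge4$ the extreme case $q_{l}\equiv 1$ already gives $\sigma_{2}=\binom{k}{2}>k=\sigma_{1}$, and a unit increase of any $q_{l}$ changes the gap $\sigma_{2}-\sigma_{1}$ by $\sum_{m\neq l}q_{m}-1\ge k-2>0$, so the inequality persists; for $k=3$ one checks that $q=(2,1,1)$ gives $5>4$ whereas $q=(1,1,1)$ gives only equality, which explains the hypothesis $\max q_{l}\ge 2$; for $k=2$ the inequality reads $q_{1}q_{2}>q_{1}+q_{2}$, which under $\min\{q_{1},q_{2}\}\ge 2$ holds exactly when $q_{1}+q_{2}\ge 5$, the borderline $(2,2)$ giving equality. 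Finally, for the converse I would argue contrapositively: when $\sigma_{2}\le\sigma_{1}$ only a handful of low-degree configurations occur, and in each I would exhibit nonconstant meromorphic $f\not\equiv g$ with $P(f)\equiv P(g)$ by parametrizing the non-diagonal component of the curve $P(x)=P(y)$, whose geometric genus the inequality is precisely designed to raise to at least two; this viewpoint also clarifies that the Nevanlinna argument above is in essence a hyperbolicity (defect-relation) count.
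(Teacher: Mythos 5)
First, a point of comparison that matters here: the paper does not prove Theorem A at all — it is quoted from Fujimoto \cite{2a}, where the proof is long and delicate — so there is no in-paper argument to measure your sketch against, and it must be judged on its own. On its own, it has a genuine gap at its core. Your local analysis is correct: critical injectivity makes every root of $P(w)-P(d_{l})$ other than $d_{l}$ simple, so a $d_{l}$-point of $f$ of multiplicity $p$ that is not a $d_{l}$-point of $g$ forces $g$ to take some value $b_{l,i}$ with multiplicity $p(q_{l}+1)\geq q_{l}+1$, whence $g'$ vanishes there to order at least $q_{l}$. But the decisive claim — that applying the Second Main Theorem to $f$ and to $g$ over the targets $d_{1},\dots,d_{k},\infty$ and ``summing'' produces $\sum_{1\leq l<m\leq k}q_{l}q_{m}$ against $\sum_{l}q_{l}$ — does not survive inspection. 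Your charging scheme attaches only the \emph{linear} weight $q_{l}$ to each non-coincident $d_{l}$-point; the left side of SMT contributes $(k-1)T(r)$; bounding the coincident points by $\overline N(r,0;f-g)\leq 2T(r)+S(r)$ and the total ramification of each function by $2T(r)+S(r)$, the best such a count can yield is an inequality of the shape $k-1\leq 3+2/\min_{l}q_{l}$, i.e.\ a proof only for large $k$. No pairwise product $q_{l}q_{m}$ can ever arise, because by critical injectivity $f=d_{l}$ and $g=d_{m}$ cannot occur at the same point when $l\neq m$, so no term in your bookkeeping couples two distinct indices. Consequently the method is vacuous exactly where the theorem is needed — $k=2$ and $k=3$ — and in particular for the polynomial of this very paper, where $P'(z)=z^{n}(z-1)^{m}$ and $k=2$: there SMT with targets $0,1,\infty$ puts only $T(r,f)$ on the left, and the sketched count produces no contradiction whatsoever. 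The quadratic quantity has a different origin: $\sum_{l<m}q_{l}q_{m}-\sum_{l}q_{l}+1$ is precisely the genus of the non-diagonal part of the curve $P(x)=P(y)$ (your closing remark gestures at this, but the Nevanlinna bookkeeping you describe does not realize that count).

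The converse direction is likewise not a proof: when the inequality fails one must actually exhibit a non-diagonal component of $P(x)=P(y)$ of genus at most $1$ on which both projections are nonconstant, including the case where that curve is reducible; ``only a handful of low-degree configurations occur'' is a plan, not an argument. What is correct and complete in your proposal is the elementary arithmetic of the ``in particular'' clauses: the gap $\sigma_{2}-\sigma_{1}$ increases by $\sum_{m\neq l}q_{m}-1\geq k-2$ when any $q_{l}$ is raised by one, the case $q_{1}=\dots=q_{k}=1$ gives $\binom{k}{2}>k$ for $k\geq 4$, and the borderline cases $(1,1,1)$ and $(2,2)$ give equality, which explains the hypotheses for $k=3$ and $k=2$. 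But that is the trivial part of the statement; the equivalence itself remains unproved in your sketch, and the route you propose cannot be repaired to give the sharp inequality.
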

For $k=1$, taking $P(z)=(z-a)^{q}-b$ for some constants $a$ and $b$ with $b\not=0$ and an integer $q\geq 2$, it is easy to verify that for an arbitrary non-constant meromorphic function $g$ and a constant $c(\not=1)$ with $c^{q}=1$, the function $g:=cf+(1-c)a(\not=f)$ satisfies the condition $P(f)=P(g)$.\par
A recent development in the uniqueness theory has been to consider the notion of weighted sharing of values and sets \{\cite{8}, \cite{9}\} which is a scaling between CM sharing and IM sharing and measures a gradual
increment from IM sharing to CM sharing.
\par
Let $k$ be a non-negative integer or infinity. For $a\in\mathbb{C}\cup\{\infty\}$ we denote by
$E_{k}(a;f)$ the set of all $a$-points of $f$, where an $a$-point of multiplicity $m$ is counted $m$
times if $m\leq k$ and $k+1$ times if $m>k$. \par
If for two meromorphic functions $f$ and $g$ we have $E_{k}(a;f)=E_{k}(a;g)$, then we say that $f$
and $g$ share the value $a$ with weight $k$.\par

The IM and CM sharing respectively correspond to weight $0$ and $\infty$.\par
For $S\subset \mathbb{C}\cup\{\infty\}$ we define $E_{f}(S,k)$ as $$E_{f}(S,k)=\displaystyle\bigcup_{a\in S}E_{k}(a;f),$$
where $k$ is a nonnegative integer or infinity. Clearly $E_{f}(S)=E_{f}(S,\infty)$.\par
The main intention of the paper is to introduce a new type of unique range set for meromorphic function which improve all the previous results in this aspect specially those of \cite{1c} and \cite{1d} by removing the ``max" conditions in deficiencies.
Henceforth for two positive integers $n$, $m$ we shall denote by $P(z)$ the following polynomial.
$$P(z)=\displaystyle\sum_{i=0}^{m}\hspace{.05in}\binom{m}{i}\frac{(-1)^{i}}{n+m+1-i} z^{n+m+1-i}+1=Q(z)+1,$$ where $Q(z)=\displaystyle\sum_{i=0}^{m}\hspace{.05in}\binom{m}{i}\frac{(-1)^{i}}{n+m+1-i} z^{n+m+1-i}$.
Clearly $P^{'}(z)=z^{n}(z-1)^{m}$. So $P(0)=1$ and $P(1)=Q(1)+1$.\\ Following theorem is the main result of the paper.
\begin{theo}\label{t1.1} Let $n(\geq 3)$, $m(\geq 3)$ be two positive integers. Suppose that $S=\{z: P(z)=0\}$. Let $f$ and $g$ be two non-constant meromorphic functions such that $E_{f}(S,l)=E_{g}(S,l)$.
Now if one of the following conditions holds:
\begin{enumerate}
\item[(a)] $l\geq 2$  and $\Theta _{f}+\Theta _{g}>(9-(n+m))$
\item[(b)] $l=1$ and  $\Theta _{f}+\Theta _{g}>(10-(n+m))$;
\item[(c)] $l=0$ and  $\Theta _{f}+\Theta _{g}>(15-(n+m))$;
\end{enumerate}
then $f\equiv g$, where $\Theta _{f}=2\Theta (0,f)+2\Theta (\infty,f)+\Theta (1,f)+\frac{1}{2}\min\{\Theta (1;f),\Theta (1;g)\}$ and $\Theta _{g}$ is similarly defined.  \end{theo}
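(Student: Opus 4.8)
\emph{Proof proposal.} The plan is to reduce the set-sharing hypothesis to the functional identity $P(f)\equiv P(g)$ and then to invoke Theorem~A. First I would record that $P$ is a uniqueness polynomial. Since $P'(z)=z^{n}(z-1)^{m}$, its only critical points are $0$ and $1$, with $P(0)=1$ and $P(1)=Q(1)+1$; because $Q(1)=\int_{0}^{1}t^{n}(t-1)^{m}\,dt\neq 0$ we have $P(0)\neq P(1)$, so $P$ is critically injective and has no multiple zero (hence $\#S=n+m+1$). With $k=2$, $q_{1}=n\geq 3$ and $q_{2}=m\geq 3$, the inequality $q_{1}q_{2}>q_{1}+q_{2}$ required in Theorem~A holds because $(n-1)(m-1)\geq 4$. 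Thus $P(f)\equiv P(g)$ will force $f\equiv g$, and it suffices to prove $F\equiv G$, where $F:=P(f)$ and $G:=P(g)$; by hypothesis $F$ and $G$ share $0$ with weight $l$.

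Next I would set up the Nevanlinna bookkeeping. Since $\deg P=n+m+1$ we have $T(r,F)=(n+m+1)T(r,f)+S(r,f)$, and from $F'=f^{n}(f-1)^{m}f'$ the zeros of $f$ and of $f-1$ force $F$ to assume the values $P(0)=1$ and $P(1)$ with multiplicities at least $n+1$ and $m+1$ respectively, while the poles and ramification of $F$ are governed by $\overline{N}(r,\infty;f)$, $\overline{N}(r,0;f)$ and $\overline{N}(r,1;f)$. This is exactly the mechanism by which the deficiencies $\Theta(0,f)$, $\Theta(1,f)$, $\Theta(\infty,f)$ and their $g$-analogues will enter the estimates. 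A preliminary step is to show that $f$ and $g$ have comparable growth, so that all error terms may be written uniformly as $S(r)$.

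The core of the argument is the auxiliary function
$$H=\left(\frac{F''}{F'}-\frac{2F'}{F}\right)-\left(\frac{G''}{G'}-\frac{2G'}{G}\right),$$
and a dichotomy on whether $H\equiv 0$. If $H\not\equiv 0$, then, because $F$ and $G$ share $0$, the common simple zeros are not poles of $H$, so $\overline{N}(r,0;F)$ is controlled by $N(r,H)$, whose poles come only from the ramification and poles of $F,G$ and from the mismatched zeros measured by the weight-$l$ terms $\overline{N}_{*}$. Feeding this into the second main theorem applied to $f$ and $g$ at $0,1,\infty$ and the $S$-points, and translating through the multiplicity gains $n+1$ and $m+1$ above, I expect to reach an inequality of the shape $(n+m)\{T(r,f)+T(r,g)\}\le\big(\text{a weighted sum of }\overline{N}\text{'s at }0,1,\infty\big)+S(r)$. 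Dividing by $T(r)$ and using the definition of $\Theta$ converts this into a numerical inequality in $\Theta_{f}+\Theta_{g}$ whose failure is precisely the hypotheses (a), (b), (c); the three constants $9,10,15$ reflect how the weight $l\in\{\geq 2,\,1,\,0\}$ changes the size of the $\overline{N}_{*}$ corrections. This multiplicity-and-weight bookkeeping is the step I expect to be the main obstacle.

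Finally, when $H\equiv 0$ I would integrate twice. Since $F$ and $G$ share $0$, the resulting M\"obius relation fixes $0$ and reads $F=G/(A-BG)$ with $A\neq 0$. Splitting into $B\neq 0$ and $B=0$: in the case $B\neq 0$ the poles of $f$ are tied to a single finite value of $g$ (and conversely), which contradicts the deficiency conditions at $\infty$; in the case $B=0$ one gets $P(g)=AP(f)$, and matching the value $P(0)=1$ together with $\Theta(0,\cdot)$ forces $A=1$. Hence $F\equiv G$, that is $P(f)\equiv P(g)$, and Theorem~A yields $f\equiv g$.
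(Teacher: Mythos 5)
Your skeleton coincides with the paper's: establish critical injectivity and simple zeros of $P$, invoke Fujimoto's Theorem~A for the case $k=2$, $q_1=n$, $q_2=m$, introduce the same auxiliary function $H$, and split on $H\equiv 0$ versus $H\not\equiv 0$. Your integral observation $Q(1)=\int_0^1 t^n(t-1)^m\,dt\neq 0$ is a clean substitute for part of the paper's Lemma 2.4. But there are two genuine gaps. The decisive one is in the case $H\equiv 0$: you propose to kill the subcase $B\neq 0$ because it ``contradicts the deficiency conditions at $\infty$'' and to force $A=1$ in the subcase $B=0$ ``together with $\Theta(0,\cdot)$.'' This cannot work, because the deficiency hypotheses (a)--(c) may be vacuous: for instance in case (a) with $n+m\geq 9$ the condition $\Theta_f+\Theta_g>9-(n+m)$ holds automatically (the right-hand side is nonpositive and every $\Theta\geq 0$), so the proof has no deficiency information available in Case 2. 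The paper's treatment is unconditional: for $B\neq 0$ it applies the second main theorem to $G$ and, in the delicate subcase $-A/B=P(1)$, exploits the multiplicity identities $(m+1)p=(n+m+1)q$ (zeros of $g-1$ against poles of $f$) and $p_i=(n+m+1)q_i$ to shrink the relevant counting functions; for $B=0$, i.e.\ $AF\equiv G$ with $A\neq 1$, it again uses the second main theorem, and in the subcase $A=P(1)$ it needs $1/P(1)\neq P(1)$, i.e.\ $P(1)\neq -1$ --- exactly what Lemma 2.4 ($Q(1)$ is not an integer) supplies. Ironically, your integral representation yields this too (it gives $|Q(1)|<1$, hence $P(1)\in(0,2)$), but you never deploy it; instead you lean on hypotheses that need not be there.

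The second gap is that in the case $H\not\equiv 0$ you only describe the expected shape of the second-main-theorem inequality and explicitly defer the ``multiplicity-and-weight bookkeeping.'' That bookkeeping is the actual content of the theorem: the constants $9$, $10$, $15$ come out of the estimates the paper labels (3.1)--(3.9), which require the Lahiri--Dewan lemma (Lemma 2.5) to bound $N(r,0;g'\mid g\neq 0)$, the decomposition of shared-zero counting functions into $N^{1)}_{E}$, $\overline{N}^{(2}_{E}$, $\overline{N}_{L}$, $\overline{N}_{*}$ according to the weight $l$, and Lemmas 2.2--2.3 controlling $N(r,H)$. Without carrying out at least one of the three weight regimes, the specific thresholds in (a)--(c) remain unverified, so the proposal as written establishes neither half of the dichotomy.
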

We now explain some definitions and notations which are used in the paper.
\begin{defi} \label{d3}\cite{7} For $a\in\mathbb{C}\cup\{\infty\}$ we denote by $N(r,a;f\mid=1)$ the counting function of simple $a$-points of $f$. For a positive integer $m$ we denote by $N(r,a;f\vline\leq m)\; (N(r,a;f\vline\geq m)$ by the counting function of those $a$-points of $f$ whose multiplicities are not greater(less) than $m$ where each $a$-point is counted according to its multiplicity.

$\ol N(r,a;f\mid\leq m)$ and $\ol N(r,a;f\mid\geq m)$ are the reduced counting function of $N(r,a;f\mid\leq m)$ and $N(r,a;f\mid\geq m)$ respectively.

Also $N(r,a;f\mid <m), N(r,a;f\mid >m), \ol N(r,a;f\mid <m)\; and \;\ol N(r,a;f\mid >m)$ are defined analogously.  \end{defi}

\begin{defi}\label{d5}\cite{16} Let $f$ and $g$ be two non-constant meromorphic functions such that $f$ and $g$ share $(a,0)$. Let $z_{0}$ be an $a$-point of $f$ with multiplicity $p$, an $a$-point of $g$ with multiplicity $q$. We denote by $\ol N_{L}(r,a;f)$ the reduced counting function of those $a$-points of $f$ and $g$ where $p>q$, by $N^{1)}_{E}(r,a;f)$ the counting function of those $a$-points of $f$ and $g$ where $p=q=1$, by $\ol N^{(2}_{E}(r,a;f)$ the reduced counting function of those $a$-points of $f$ and $g$ where $p=q\geq 2$. In the same way we can define $\ol N_{L}(r,a;g),\;  N^{1)}_{E}(r,a;g),\;  \ol N^{(2}_{E}(r,a;g).$ In a similar manner we can define $\ol N_{L}(r,a;f)$ and $\ol N_{L}(r,a;g)$ for $a\in\mathbb{C}\cup\{\infty\}$. \end{defi}
When $f$ and $g$ share $(a,m)$, $m\geq 1$ then $N^{1)}_{E}(r,a;f)=N(r,a;f\mid=1)$.
\begin{defi}\label{d6} We denote by $\ol N(r,a;f\mid=k)$ the reduced counting function of those $a$-points of $f$ whose multiplicities is exactly $k$, where $k\geq 2$ is an integer. \end{defi}
\begin{defi} \label{d7}\cite{8, 9} Let $f$, $g$ share a value $a$ IM. We denote by $\ol N_{*}(r,a;f,g)$ the reduced counting function of those $a$-points of $f$ whose multiplicities differ from the multiplicities of the corresponding $a$-points of $g$.

Clearly $\ol N_{*}(r,a;f,g) \equiv \ol N_{*}(r,a;g,f)$ and $\ol N_{*}(r,a;f,g)=\ol N_{L}(r,a;f)+\ol N_{L}(r,a;g)$\end{defi}

\section{Lemmas} In this section we present some lemmas which will be needed in the sequel.
Let, unless otherwise stated $F$ and $G$ be two non-constant meromorphic functions given by $F=P(f)$ and $G=P(g)$.
Henceforth we shall denote by $H$ the following function \be{\label{e2.1}}H=\left(\frac{F^{''}}{F^{'}}-\frac{2F^{'}}{F}\right)-\left(\frac{G^{''}}{G^{'}}-\frac{2G^{'}}{G}\right).\ee
\begin{lem}\label{l1}\cite{11} Let $f$ be a non-constant meromorphic function and let \[R(f)=\frac{\sum\limits _{k=0}^{n} a_{k}f^{k}}{\sum \limits_{j=0}^{m} b_{j}f^{j}}\] be an irreducible rational function in $f$ with constant coefficients $\{a_{k}\}$ and $\{b_{j}\}$where $a_{n}\not=0$ and $b_{m}\not=0$ Then $$T(r,R(f))=dT(r,f)+S(r,f),$$ where $d=\max\{n,m\}$.\end{lem}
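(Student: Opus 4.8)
The plan is to prove the equality first for polynomials and then to bootstrap to the general rational function, using throughout only the First and Second Main Theorems together with the elementary behaviour of $P$ and $Q$ at their zeros, poles and at infinity. I would first settle the polynomial case $T(r,P(f))=n\,T(r,f)+O(1)$ for $\deg P=n$: a pole of $P(f)$ occurs exactly at a pole of $f$, with $n$ times the order, so $N(r,\infty;P(f))=n\,N(r,\infty;f)$; and since $|P(w)|\asymp|w|^{n}$ as $|w|\to\infty$, splitting $\{|z|=r\}$ into the loci $\{|f|\ge M\}$ and $\{|f|<M\}$ gives $m(r,P(f))=n\,m(r,f)+O(1)$. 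Adding the two proves the claim, which in particular controls the numerator $P$ and the denominator $Q$ of $R$.

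For $R=P/Q$ (irreducible, $\deg P=n$, $\deg Q=m$, $d=\max\{n,m\}$) I would get the upper bound by a partial-fraction estimate. Writing $R$ as a polynomial part of degree $(n-m)^{+}$ plus principal parts at its poles $\beta_{1},\dots,\beta_{q}$ of orders $t_{1},\dots,t_{q}$ (so $\sum_{j}t_{j}=m$), one obtains
\[
m(r,R(f))\le (n-m)^{+}m(r,f)+\sum_{j=1}^{q}t_{j}\,m(r,\beta_{j};f)+O(1),
\]
while the poles of $R(f)$ lie exactly over the $\beta_{j}$ and (when $n>m$) over the poles of $f$, giving the exact relation $N(r,R(f))=(n-m)^{+}N(r,f)+\sum_{j}t_{j}N(r,\beta_{j};f)$. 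Adding these and collapsing each pair $m(r,\beta_{j};f)+N(r,\beta_{j};f)=T(r,f)+O(1)$ by the First Main Theorem yields $T(r,R(f))\le\bigl((n-m)^{+}+m\bigr)T(r,f)+O(1)=d\,T(r,f)+O(1)$.

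For the matching lower bound the key is that $R\colon\mathbb{C}\cup\{\infty\}\to\mathbb{C}\cup\{\infty\}$ has degree $d$. Choosing $q$ generic values $a_{1},\dots,a_{q}$ — avoiding the finitely many critical values of $R$, the exceptional value $a_{n}/b_{m}$ that occurs when $n=m$, and also $0,\infty$ — each fibre $R^{-1}(a_{\nu})=\{w^{\nu}_{1},\dots,w^{\nu}_{d}\}$ consists of $d$ distinct unramified finite points, and the $qd$ points so obtained are mutually distinct. Then $N(r,a_{\nu};R(f))=\sum_{k}N(r,w^{\nu}_{k};f)=d\,T(r,f)-\sum_{k}m(r,w^{\nu}_{k};f)+O(1)$. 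Summing over $\nu$, bounding $\sum_{\nu,k}m(r,w^{\nu}_{k};f)\le 2\,T(r,f)+S(r,f)$ by the Second Main Theorem for $f$, and using $N(r,a_{\nu};R(f))\le T(r,R(f))+O(1)$, I get $T(r,R(f))\ge\bigl(d-2/q\bigr)T(r,f)-S(r,f)$; letting $q$ grow pins the leading coefficient to $d$, so together with the upper bound $T(r,R(f))=d\,T(r,f)+S(r,f)$.

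The only real work — and the step I expect to be the main obstacle — is this error-term bookkeeping in the lower bound: extracting the exact coefficient $d$ rather than a multiple of it, keeping the remainder $o(T(r,f))$ off an exceptional set, and correctly excluding the ramified and exceptional values of $a$. A slicker and essentially equivalent route, and the one behind the cited source, is to observe that $f$ and $R(f)$ satisfy the irreducible polynomial relation $P(f)-R(f)\,Q(f)\equiv 0$, which has degree $d$ in $f$ and degree $1$ in $R(f)$; the algebraic-relation form of Mokhon'ko's theorem then delivers $T(r,R(f))=d\,T(r,f)+S(r,f)$ at once.
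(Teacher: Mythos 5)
First, a point of context: the paper contains no proof of this lemma at all --- it is the classical Valiron--Mohon'ko theorem, imported verbatim with the citation \cite{11} --- so your attempt can only be judged on its own merits, not against an argument in the paper. On those merits, your upper bound is complete and correct: the polynomial case, the partial-fraction estimate $m(r,R(f))\le (n-m)^{+}m(r,f)+\sum_{j}t_{j}\,m(r,\beta_{j};f)+O(1)$, the exact pole count, and the First Main Theorem do combine to give $T(r,R(f))\le\bigl((n-m)^{+}+m\bigr)T(r,f)+O(1)=d\,T(r,f)+O(1)$.

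The genuine gap is in the lower bound, exactly where you predicted it. For each fixed $q$ your argument yields $T(r,R(f))\ge (d-2/q)\,T(r,f)-S_{q}(r,f)$, where $S_{q}(r,f)=o(T(r,f))$ only outside an exceptional set $E_{q}$ of finite measure attached to that particular application of the Second Main Theorem. For fixed $q$ this does not give the assertion, since $(2/q)T(r,f)$ is not $o(T(r,f))$; and ``letting $q$ grow'' is not automatic, because the sets $E_{q}$ change with $q$ and their union may have infinite measure, so as written there is no single exceptional set off which the conclusion holds. The gap is fillable by a diagonal argument which you should make explicit: since $|E_{q}|<\infty$, choose $R_{q}\uparrow\infty$ so that $|E_{q}\cap(R_{q},\infty)|<2^{-q}$ and so that $S_{q}(r,f)\le T(r,f)/q$ for $r>R_{q}$, $r\notin E_{q}$; then $E=\bigcup_{q}\bigl(E_{q}\cap(R_{q},\infty)\bigr)$ has finite measure, and for $r\notin E$, $r>R_{q}$ one gets $d\,T(r,f)-T(r,R(f))\le (3/q)\,T(r,f)$ for every $q$, which together with your upper bound gives $T(r,R(f))=d\,T(r,f)+S(r,f)$. (You also need to dispose of rational $f$ separately --- trivial by direct computation --- since the Second Main Theorem's remainder is only $o(T(r,f))$ for transcendental $f$.) Finally, your proposed ``slicker route'' is circular: the algebraic-relation form of Mokhon'ko's theorem \emph{is} the cited result, not a tool available for proving it; the proof in \cite{11} (reproduced in Laine's book) is instead a purely algebraic reduction with $O(1)$-type errors that never invokes the Second Main Theorem and hence never meets the exceptional-set problem that your route must overcome.
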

\begin{lem}\label{l2} If $F$, $G$ are two non-constant meromorphic functions such that they share $(0,0)$ and $H\not\equiv 0$ then $$N^{1)}_{E}(r,0;F\mid=1)=N^{1)}_{E}(r,0;G\mid=1)\leq N(r,H)+S(r,f)+S(r,g).$$\end{lem}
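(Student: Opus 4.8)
The plan is to exploit the local structure of $H$ at the common simple zeros of $F$ and $G$. First, the asserted equality $N^{1)}_{E}(r,0;F\mid=1)=N^{1)}_{E}(r,0;G\mid=1)$ is immediate from the definition of $N^{1)}_{E}$: this counting function records precisely those $0$-points where $p=q=1$, a condition symmetric in $F$ and $G$. Hence the entire content of the lemma lies in the inequality, and I would focus on bounding the number of common simple zeros.

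The heart of the argument is a Laurent expansion of the auxiliary expression $\psi_{F}:=\frac{F^{''}}{F^{'}}-\frac{2F^{'}}{F}$ around a point $z_{0}$ that is a simple zero of both $F$ and $G$. Writing $w=z-z_{0}$ and $F=a_{1}w+a_{2}w^{2}+\cdots$ with $a_{1}\neq0$, a direct computation gives $\frac{F^{'}}{F}=\frac{1}{w}+\frac{a_{2}}{a_{1}}+O(w)$ and $\frac{F^{''}}{F^{'}}=\frac{2a_{2}}{a_{1}}+O(w)$, so that $\psi_{F}=-\frac{2}{w}+O(w)$; crucially, the constant term cancels. The identical expansion holds for $\psi_{G}$, so in $H=\psi_{F}-\psi_{G}$ both the polar term $-2/w$ and the constant term disappear, leaving $H$ holomorphic and vanishing at $z_{0}$. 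Thus every common simple zero of $F$ and $G$ is a zero of $H$.

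Since $H\not\equiv0$ by hypothesis, each such zero is counted (at least once) by $\ol N(r,1/H)$, whence
$$N^{1)}_{E}(r,0;F\mid=1)\le \ol N(r,1/H)\le N\!\left(r,\tfrac{1}{H}\right)\le T(r,H)+O(1)$$
by the First Fundamental Theorem. It then remains to split $T(r,H)=m(r,H)+N(r,H)$ and to show $m(r,H)=S(r,f)+S(r,g)$. This follows because every term of $H$ is a logarithmic derivative ($\frac{F^{''}}{F^{'}}=(\log F^{'})^{'}$ and $\frac{F^{'}}{F}=(\log F)^{'}$, and likewise for $G$), so the lemma on the logarithmic derivative yields $m(r,H)\le m(r,\frac{F^{''}}{F^{'}})+m(r,\frac{F^{'}}{F})+m(r,\frac{G^{''}}{G^{'}})+m(r,\frac{G^{'}}{G})+O(1)=S(r,F)+S(r,G)$, while Lemma \ref{l1} identifies $S(r,F)=S(r,f)$ and $S(r,G)=S(r,g)$. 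Combining these estimates gives the stated bound.

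The main obstacle I expect is the local computation in the second step: one must verify not merely that the simple poles $-2/w$ of $\psi_{F}$ and $\psi_{G}$ cancel (which is automatic once both are simple zeros) but that the \emph{constant} terms cancel as well, so that $z_{0}$ becomes a genuine zero of $H$ rather than a regular point. This is precisely the feature that forces the particular coefficient $2$ in the definition \eqref{e2.1} of $H$, and carrying the expansion to the correct order is the delicate part; the subsequent application of the First Fundamental Theorem and the logarithmic-derivative estimate is routine.
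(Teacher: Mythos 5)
Your proposal is correct and follows essentially the same route as the paper's own proof: the paper also notes that each common simple zero of $F$ and $G$ is a zero of $H$ (via the Laurent expansion you carry out explicitly), bounds the count by $N(r,0;H)\leq T(r,H)+O(1)$, and uses the lemma on the logarithmic derivative to reduce $T(r,H)$ to $N(r,\infty;H)+S(r,f)+S(r,g)$. Your write-up merely supplies the details (the cancellation of the constant term in the expansion, and the identification $S(r,F)=S(r,f)$ via Lemma \ref{l1}) that the paper leaves implicit.
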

\begin{proof}By the Lemma of Logarithmic derivative we obtain $$m(r,H)=S(r,f)+S(r,g)(:=S(r)).$$
\par By Laurent expansion of $H$ we can easily verify that each simple zero of $F$ (and so of $G$) is a zero of $H$. Hence
\beas N^{1)}_{E}(r,0;F\mid=1)&=&N^{1)}_{E}(r,0;G\mid=1)\nonumber\\&\leq& N(r,0;H)\nonumber\\&\leq& T(r,H)+O(1)\nonumber\\&=& N(r,\infty;H)+S(r,f)+S(r,g).\nonumber\eeas\end{proof}
\begin{lem}\label{l3} Let $S$ be the set of zeros of $P$. If for two non-constant meromorphic functions $f$ and $g$, $E_{f}(S,0)=E_{g}(S,0)$ and $H\not\equiv 0$ then \beas \ol N(r,\infty;H)&\leq& \ol N(r,0;f)+\ol N(r,1;f)+\ol N(r,0;g)+\ol N(r,1;g)+\ol N_{*}(r,0;F,G)\\& &+\ol N(r,\infty;f)+\ol N(r,\infty;g)+\ol N_{0}(r,0;f^{'})+\ol N_{0}(r,0;g^{'}),\eeas where by $\ol N_{0}(r,0;f^{'})$ we mean the reduced counting function of those zeros of $f^{'}$ which are not the zeros of $Ff(f-1)$ and $\ol N_{0}(r,0;g^{'})$ is similarly defined.\end{lem}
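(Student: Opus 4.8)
The plan is to bound $\ol N(r,\infty;H)$ by a purely local analysis of the poles of $H$; since this counting function only records where poles occur, it suffices to locate every pole and to check that each one is simple. The first step is to rewrite $H$ through $f$ and $g$ instead of $F$ and $G$. Using $P^{'}(z)=z^{n}(z-1)^{m}$ a direct computation gives
$$\frac{F^{''}}{F^{'}}=\frac{f^{''}}{f^{'}}+\frac{nf^{'}}{f}+\frac{mf^{'}}{f-1},\qquad \frac{F^{'}}{F}=\frac{f^{n}(f-1)^{m}f^{'}}{P(f)},$$
and the analogous identities for $g$. From these formulas it is transparent that a point can be a pole of $H$ only if it is a zero of $F$ or $G$, a pole of $f$ or $g$, a zero of $f$, $g$, $f-1$ or $g-1$, or a zero of $f^{'}$ or $g^{'}$; at every other point all the summands are holomorphic.

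The second step treats each of these families by a local Laurent expansion. At a common zero $z_{0}$ of $F$ and $G$ (that is, an $S$-point, which $f$ and $g$ share by the hypothesis $E_{f}(S,0)=E_{g}(S,0)$), writing $F=(z-z_{0})^{a}\phi$ and $G=(z-z_{0})^{b}\psi$ with $\phi(z_{0}),\psi(z_{0})\neq 0$ yields $\frac{F^{''}}{F^{'}}-\frac{2F^{'}}{F}=\frac{-(a+1)}{z-z_{0}}+O(1)$, so that $H=\frac{b-a}{z-z_{0}}+O(1)$. Thus $H$ is holomorphic at $z_{0}$ when $a=b$ and has a simple pole exactly when the two multiplicities differ; these points are precisely the ones measured by $\ol N_{*}(r,0;F,G)$. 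At a zero of $f$, a $1$-point of $f$, or a pole of $f$, the values $P(0)=1$ and $P(1)\neq 0$ (so that $0,1\notin S$) keep $F$ away from $0$ in the first two cases and send $F$ to $\infty$ in the last, hence only the term $\frac{F^{''}}{F^{'}}$ produces a pole, coming from the corresponding zero or pole of $F^{'}=f^{n}(f-1)^{m}f^{'}$, and this pole is simple. These contributions are charged to $\ol N(r,0;f)$, $\ol N(r,1;f)$ and $\ol N(r,\infty;f)$ respectively, and symmetrically for $g$.

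The third step disposes of the remaining zeros of $f^{'}$. A zero of $f^{'}$ lying over $0$, $1$, or an $S$-value is already a zero of $Ff(f-1)$ and has been accounted for above; a zero of $f^{'}$ at which $f\notin\{0,1,\infty\}\cup S$ makes $P^{'}(f)=f^{n}(f-1)^{m}$ nonzero, so $F^{'}$ vanishes there to exactly the order of the zero of $f^{'}$ while $F$ stays finite and nonzero, whence $\frac{F^{''}}{F^{'}}$, and therefore $H$, has a simple pole. These points are exactly those recorded by $\ol N_{0}(r,0;f^{'})$, and symmetrically by $\ol N_{0}(r,0;g^{'})$. Adding up the contributions of all the families, and noting that any overlaps only strengthen the inequality, gives the asserted estimate.

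I expect the main obstacle to lie in the bookkeeping at the two shared families: establishing the genuine cancellation at $S$-points with equal multiplicity, so that such points never enter the bound, and, in the analysis of $f^{'}$, cleanly separating the zeros of $f^{'}$ that sit over $0$, $1$, $\infty$ or $S$ (already subsumed in the earlier counts) from the free zeros that generate the $\ol N_{0}$ terms. One must also verify uniformly that no pole of $H$ exceeds order one, which is exactly what justifies using the reduced counting functions throughout.
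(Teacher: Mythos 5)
Your proposal is correct and takes essentially the same route as the paper's proof: both enumerate the possible poles of $H$ (poles of $f$ and $g$; zeros and $1$-points of $f$ and $g$; shared zeros of $F$ and $G$ with differing multiplicities; the ``free'' zeros of $f^{'}$ and $g^{'}$), use the cancellation at shared zeros of equal multiplicity, and conclude from the fact that $H$, being built from logarithmic derivatives, has only simple poles. The only difference is one of detail: you carry out explicitly the local Laurent expansions that the paper dismisses as ``easily verified.''
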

\begin{proof} Since $E_{f}(S,0)=E_{g}(S,0)$ it follows that $F$ and $G$ share $(0,0)$. Also we observe that $F^{'}=f^{n}(f-1)^{m}f^{'}$.
It can be easily verified that possible poles of $H$ occur at (i) poles of $f$ and $g$, (ii) those $0$-points of $F$ and $G$ with different multiplicities, (iii) zeros of $f^{'}$ which are not the zeros of $Ff(f-1)$, (iv) zeros of $g^{'}$ which are not zeros of $Gg(g-1)$, (v) $0$ and $1$ points of $f$ and $g$.
Since $H$ has only simple poles, the lemma follows from above. This proves the lemma.\end{proof}
\begin{lem}\label{l4} $Q(1)$ is not an integer. In particular, $P(1)\not =-1$, where $n\geq 3$, $m\geq 3$ are integers.\end{lem}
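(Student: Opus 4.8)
The plan is to obtain a closed form for $Q(1)$ and then observe that it is the reciprocal of an integer exceeding $1$ in absolute value, which places it strictly between $0$ and $1$. The starting point is the identity $P'(z)=z^n(z-1)^m$ recorded just before the statement, together with $Q(0)=0$ and $P=Q+1$. Since $Q$ is precisely the antiderivative of $P'$ that vanishes at $0$, I would write $Q(1)=\int_0^1 x^n(x-1)^m\,dx$. This recasts the awkward alternating sum defining $Q(1)$ as a Beta-type integral and is the one genuine idea in the argument.

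Next I would evaluate this integral. Pulling out the sign gives $\int_0^1 x^n(x-1)^m\,dx=(-1)^m\int_0^1 x^n(1-x)^m\,dx=(-1)^m B(n+1,m+1)$, whence the clean closed form $Q(1)=(-1)^m\,\dfrac{n!\,m!}{(n+m+1)!}$. Equivalently, one can avoid integration altogether: substituting $j=m-i$ in the defining sum reduces it, up to the factor $(-1)^m$, to the standard partial-fraction identity $\sum_{j=0}^m\binom{m}{j}\frac{(-1)^j}{x+j}=\frac{m!}{x(x+1)\cdots(x+m)}$ evaluated at $x=n+1$, which produces the same value.

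With the closed form in hand the rest is immediate. I would rewrite $|Q(1)|=\dfrac{n!\,m!}{(n+m+1)!}=\dfrac{1}{(n+m+1)\binom{n+m}{m}}$, and note that for $n\geq 3$, $m\geq 3$ the integer $(n+m+1)\binom{n+m}{m}$ is at least $7\cdot 20>1$ (indeed it equals $140$ already at $n=m=3$, giving $|Q(1)|=1/140$). Hence $0<|Q(1)|<1$, so $Q(1)$ is a nonzero number of modulus less than one and therefore cannot be an integer. Finally, since $P(1)=Q(1)+1$, if $P(1)$ were equal to $-1$ we would have $Q(1)=-2\in\mathbb{Z}$, contradicting what was just shown; thus $P(1)\neq -1$.

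I expect no serious obstacle here. The only substantive step is recognizing that the defining sum is exactly the Beta integral $\int_0^1 x^n(x-1)^m\,dx$, after which everything is forced. The remaining care is purely bookkeeping: tracking the factor $(-1)^m$ and confirming the elementary bound $(n+m+1)\binom{n+m}{m}\geq 2$, which is what pushes $Q(1)$ off the integers.
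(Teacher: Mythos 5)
Your proof is correct, and it takes a genuinely different route from the paper's. The paper establishes the same closed form $Q(1)=(-1)^m m!/\bigl((n+m+1)(n+m)\cdots(n+1)\bigr)$ by a somewhat laborious induction on $m$, using the recursion $S_{n}(k+1)=S_{n+1}(k)-S_{n}(k)$ obtained by splitting each binomial coefficient via Pascal's rule; it then concludes, exactly as you do, that a nonzero number of modulus less than $1$ cannot be an integer. Your observation that $Q$ is the unique antiderivative of $P'(z)=z^n(z-1)^m$ vanishing at $0$, so that $Q(1)=\int_0^1 x^n(x-1)^m\,dx=(-1)^m B(n+1,m+1)$, replaces the entire induction with a one-line Beta-integral evaluation (and your alternative via the partial-fraction identity $\sum_{j=0}^m\binom{m}{j}\frac{(-1)^j}{x+j}=\frac{m!}{x(x+1)\cdots(x+m)}$ at $x=n+1$ is an equally clean purely algebraic substitute). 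What your approach buys is brevity and conceptual transparency — the closed form is forced rather than guessed-and-verified — and it also makes plain that the conclusion holds for all $n,m\geq 1$, not just $n,m\geq 3$; what the paper's induction buys is self-containedness at the level of elementary algebra, with no appeal to integration or to a quoted combinatorial identity. All the bookkeeping in your write-up (the sign $(-1)^m$, the identity $n!\,m!/(n+m+1)!=1/\bigl((n+m+1)\binom{n+m}{m}\bigr)$, and the numerical check $7\cdot 20=140$ at $n=m=3$) is accurate.
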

\begin{proof}We claim that \beas S_{n}(m)&=& \sum\limits_{i=0}^{m}\binom{m}{i} \frac{(-1)^i}{n+m+1-i}\\&=& \frac{\binom{m}{0}}{n+m+1} - \frac{\binom{m}{1}}{n+m+1-1} + \ldots+ (-1)^{m}\frac{\binom{m}{m}}{n+1}\\&=&\frac{(-1)^m m!}{(n+m+1)(n+m)\ldots(n+1)}.\eeas
We prove the claim by method of induction on $m$. \par
At first for $m=3$ we get \beas S_{n}(3)&=&\frac{1}{n+4}-\frac{3}{n+3}+\frac{3}{n+2}-\frac{1}{n+1}\\&=&\frac{(-1)^3\cdot 3!}{(n+4)(n+3)(n+2)(n+1)}.\eeas
So, $S_{n}(m)$ is true for $m=3$. Now we assume that $S_{n}(m)$ is true for $m=k$, where $k$ is any given positive integer such that $k\geq3$. Now we will show that $S_{n}(m)$ is true for $m=k+1$.
i.e., \beas {\frac{\binom{k+1}{0}}{n+k+2}} - {\frac{\binom{k+1}{1}}{n+k+1}} +\ldots + (-1)^{k+1}\frac{\binom{k+1}{k+1}}{n+1}=\frac{(-1)^{(k+1)} {(k+1)!}}{(n+k+2)(n+k+1)\ldots(n+1)}.\eeas
Using induction hypothesis we have \beas S_{n}(k+1)&=&\frac{1}{n+k+2} - \frac{k+1}{n+k+1} + \frac{(k+1)k}{2(n+k)}- \ldots + \frac{(-1)^{k+1}}{n+1}\\
&=&\left[\frac{1}{n+k+2} - \frac{k}{n+k+1} + \frac{k(k-1)}{2(n+k)} - \ldots+ \frac{(-1)^{k}}{n+2}\right] \\&&-\left[\frac{1}{n+k+1} - \frac{2k}{2(n+k)} + \frac{3k(k-1)}{2.3(n+k-1)} - \ldots+ \frac{(-1)^{k}}{n+1}\right]\\
&=& \left[\frac{\binom{k}{0}}{(n+1)+k+1} - \frac{\binom{k}{1}}{(n+1)+k} + \frac{\binom{k}{2}}{(n+1)+k-1} - \ldots+ (-1)^{k}\frac{\binom{k}{k}}{(n+1)+1}\right]
\\&&-\left[\frac{\binom{k}{0}}{n+k+1} - \frac{\binom{k}{1}}{(n+k)} + \frac{\binom{k}{2}}{(n+k-1)} - \ldots + (-1)^{k}\frac{\binom{k}{k}}{n+1}\right]\\&=& S_{n+1}(k)-S_{n}(k)\\&=&\frac{(-1)^{k} k!}{(n+k+2)(n+k+1)\ldots(n+2)}-\frac{(-1)^{k} k!}{(n+k+1)(n+k)\ldots(n+1)}\\&=& \frac{(-1)^{(k+1)} (k+1)!}{(n+k+2)(n+k+1)\ldots(n+1)}.\eeas
So our claim has been established. We note that $S_{n}(m)=(-1)^{m}\prod\limits_{i=1}^{m}\frac{i}{(n+i)}\frac{1}{(n+m+1)}$ and hence it can not be an integer.
In particular we have proved that $Q(1)\not =-2$ i.e., $P(1)\not=-1$. \end{proof}
\begin{lem}\label{l6}\cite{10} If $N(r,0;f^{(k)}\vline f\not=0)$ denotes the counting function of those zeros of $f^{(k)}$ which are not the zeros of $f$, where a zero of $f^{(k)}$ is counted according to its multiplicity then $$N(r,0;f^{(k)}\vline f\not=0)\leq k\ol N(r,\infty;f)+N(r,0;f\vline <k)+k\ol N(r,0;f\vline\geq k)+S(r,f).$$\end{lem}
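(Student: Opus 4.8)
The plan is to introduce the auxiliary meromorphic function $\varphi:=f^{(k)}/f$ and to recognise that the quantity to be estimated is exactly its zero-counting function. First I would note that at a point where $f$ is finite and nonzero, $\varphi$ vanishes precisely when $f^{(k)}$ does and to the same order, while at the zeros and at the poles of $f$ the function $\varphi$ can only have poles (as the local analysis below confirms). Consequently $N(r,0;f^{(k)}\mid f\neq 0)=N(r,0;\varphi)$, so the whole task reduces to producing an upper bound for $N(r,0;\varphi)$.

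For that upper bound I would combine the First Fundamental Theorem with the Lemma of the logarithmic derivative. Since $N(r,0;\varphi)\le T(r,\varphi)+O(1)=m(r,\varphi)+N(r,\infty;\varphi)+O(1)$ and $m(r,\varphi)=m(r,f^{(k)}/f)=S(r,f)$, everything comes down to estimating the pole-counting function $N(r,\infty;\varphi)$ of $\varphi$.

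The core of the argument is then a local computation of the poles of $\varphi=f^{(k)}/f$, which I would split into cases. At a pole of $f$ of arbitrary order $p$ the numerator has a pole of order $p+k$ and the denominator a pole of order $p$, so $\varphi$ has a pole of order exactly $k$; summed over the poles of $f$ this yields $k\,\ol N(r,\infty;f)$. At a zero of $f$ of order $q$ I would separate two regimes: when $q\ge k$ the derivative $f^{(k)}$ has a zero of order $q-k$ (and is nonvanishing when $q=k$), so $\varphi$ has a pole of order exactly $k$, contributing $k\,\ol N(r,0;f\mid\ge k)$; when $q<k$ the derivative $f^{(k)}$ is holomorphic at the point and $\varphi$ has a pole of order at most $q$, contributing at most $N(r,0;f\mid<k)$. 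Adding these gives $N(r,\infty;\varphi)\le k\,\ol N(r,\infty;f)+N(r,0;f\mid<k)+k\,\ol N(r,0;f\mid\ge k)$, and inserting this into the previous display produces the claimed inequality.

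The step I expect to demand the most care is this local bookkeeping of the pole orders of $\varphi$: in particular the fact that every pole of $f$ contributes a pole of order exactly $k$ regardless of its order $p$, and the handling of the zeros of $f$ at the threshold $q=k$, since it is precisely this dichotomy that explains why the estimate carries the full counting function $N(r,0;f\mid<k)$ for low-order zeros but only the reduced, weight-$k$ term $k\,\ol N(r,0;f\mid\ge k)$ for high-order ones.
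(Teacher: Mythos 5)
The paper gives no proof of this lemma at all: it is imported verbatim from Lahiri and Dewan \cite{10}, so there is no in-paper argument to compare against. Your proposal is the standard proof of this estimate (essentially the one in that reference) and it is correct in substance: reducing to $\varphi=f^{(k)}/f$, invoking the first fundamental theorem together with $m(r,\varphi)=S(r,f)$, and bounding $N(r,\infty;\varphi)$ by local analysis at the poles and zeros of $f$ yields exactly the stated inequality, and your bookkeeping there (a pole of order exactly $k$ at every pole of $f$ and at every zero of $f$ of order $q\geq k$; a pole of order at most $q$ at a zero of order $q<k$) is accurate. One claim should be weakened, though it costs you nothing: it is not true that $\varphi$ ``can only have poles'' at the zeros of $f$, so the asserted equality $N(r,0;f^{(k)}\mid f\neq 0)=N(r,0;\varphi)$ can fail. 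At a zero of $f$ of order $q<k$ the numerator $f^{(k)}$ may vanish to order larger than $q$; for instance $f(z)=z^{2}+z^{6}$ and $k=3$ give $f'''(z)=120z^{3}$, so $\varphi$ has a simple zero at the origin, which $N(r,0;f^{(k)}\mid f\neq 0)$ does not count. What survives, and is all you need, is the inequality $N(r,0;f^{(k)}\mid f\neq 0)\leq N(r,0;\varphi)$, since every zero of $f^{(k)}$ at a point where $f$ is finite and nonzero is a zero of $\varphi$ of the same multiplicity; with that one adjustment your proof is complete.
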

\section {Proof of the theorem}
\begin{proof} [Proof of Theorem \ref{t1.1}]  First we observe that since $P(0)=1\not =P(1)=Q(1)+1$, $P(z)$ is critically injective polynomial. Also $P(z)-1$ and $P(z)-P(1)$ have a zero of multiplicity $n+1$ and $m+1$ respectively at $0$ and $1$, it follows that the zeros of $P(z)$ are simple.
Let the zeros be given by $\alpha _{j}$, $j=1,2,\ldots,n+m+1$.
Since $E_{f}(S,l)=E_{g}(S,l)$ it follows that $F$, $G$ share $(0,l)$.\\
{\bf Case 1}. If possible let us suppose that $H\not\equiv 0$. \\
{\bf Subcase 1.1.} $l\geq 1$.
While $l\geq 2$, using {\it Lemma \ref{l6}} we note that
\bea \label{e3.1}& & \ol N_{0}(r,0;g^{'})+\ol N(r,0;G\mid\geq 2)+ \ol N_{*}(r,0;F,G) \\&\leq& \ol N_{0}(r,0;g^{'})+\ol N(r,0;G \mid\geq 2)+\ol N(r,0;G\mid\geq 3)\nonumber\\ &\leq& N(r,0;g^{'}\vline g\not=0)+S(r,g)\nonumber\\&\leq&\ol N(r,0;g)+\ol N(r,\infty;g)+S(r,g).\nonumber \eea
Hence using (\ref{e3.1}), {\it Lemmas \ref{l1}}, {\it \ref{l2}} and {\it \ref{l3}} we get from second fundamental theorem for $\varepsilon >0$ that
\bea \label{e3.1a}& &\;\;(n+m+2)T(r,f)\\ &\leq & \ol N(r,\infty;f)+\ol N(r,0;f)+\ol N(r,1;f)+N(r,0;F\mid=1)+\ol N(r,0;F\mid\geq 2)\nonumber\\ & &-N_{0}(r,0;f^{'})+S(r,f)\nonumber\\&\leq & 2\ol N(r,0;f)+2\ol N(r,1;f)+\ol N(r,0;g)+\ol N(r,1;g)+ 2\ol N(r,\infty;f) + \ol N(r,\infty; g)\nonumber\\& & + \ol N_{0}(r,0;g') + \ol N(r,0;G\mid\geq 2) + \ol N_{*}(r,0;F,G) + S(r)\nonumber\\&\leq& 2\{\ol N(r,0;f)+\ol N(r,1;f)+ \ol N(r,\infty;f)+\ol N(r,0;g)+ \ol N(r,\infty; g)\}+\ol N(r,1;g) + S(r).\nonumber\\ &\leq & (11-2\Theta (0;f)-2\Theta (\infty;f)-2\Theta (1;f)-2\Theta (0;g)-2\Theta (\infty;g)-\Theta (1;g)+\varepsilon )T(r)+S(r).\nonumber\eea \par
In a similar way we can obtain
\bea \label{e3.1aa} (n+m+2)T(r,g)&\leq& (11-2\Theta (0;f)-2\Theta (\infty;f)-\Theta (1;f)-2\Theta (0;g)\\&&-2\Theta (\infty;g)-2\Theta (1;g)+\varepsilon )T(r)+S(r).\nonumber\eea
Combining (\ref{e3.1a}) and (\ref{e3.1aa}) we see that
\bea\label{e3.1aaa}&&(n+m-9+2\Theta (0;f)+2\Theta (\infty;f)+\Theta (1;f)+2\Theta (0;g)\\&&+2\Theta (\infty;g)+\Theta (1;g)+\min\{\Theta (1;f),\Theta (1;g)\}-\varepsilon )T(r) \leq S(r).\nonumber\eea
Since $\varepsilon > 0$ is arbitrary (\ref{e3.1aaa}) leads to a contradiction.\\
While $l=1$, using {\it Lemma \ref{l6}}, (\ref{e3.1}) can be changed to
\bea \label{e3.1e}& & \ol N_{0}(r,0;g^{'})+\ol N(r,0;G\mid\geq 2)+ \ol N_{*}(r,0;F,G) \\&\leq& \ol N_{0}(r,0;g^{'})+\ol N(r,0;G \mid\geq 2)+\ol N_{L}(r,0;G)+\ol N(r,0;F\mid\geq 3)\nonumber\\&\leq& N(r,0;g^{'}\vline g\not=0)+\sum\limits_{j=1}^{n+m+1}\ol N(r,\alpha _{j};f\mid\geq 3)\nonumber\\&\leq&\ol N(r,0;g)+\ol N(r,\infty;g)+\frac{1}{2}\sum \limits_{j=1}^{n+m+1}\{ N(r,\alpha_{j};f)-\ol N(r,\alpha_{j};f)\}+S(r,g)\nonumber\\&\leq& \ol N(r,0;g)+\ol N(r,\infty;g)+\frac{1}{2} N(r,0;f^{'}\mid f\not = 0)+S(r,g)\nonumber\\&\leq& \ol N(r,0;g)+\ol N(r,\infty;g)+\frac{1}{2}\{\ol N(r,0;f)+\ol N(r,\infty;f)\}+S(r,f)+S(r,g),\nonumber\eea
So using (\ref{e3.1e}), {\it Lemmas \ref{l2}} and {\it \ref{l3}} and proceeding as in (\ref{e3.1a}) we get from second fundamental theorem for $\varepsilon >0$ that
\bea \label{e3.1b}&&\;\; (n+m+2)T(r,f)\leq (12-2\Theta (0;f)-2\Theta (\infty;f)-2\Theta (1;f)\\&&-2\Theta (0;g)-2\Theta (\infty;g)-\Theta (1;g)+\varepsilon )T(r)+S(r)\nonumber.\nonumber\eea
Similarly we can obtain
\bea \label{e3.1bb} (n+m+2)T(r,g) &\leq& (12-2\Theta (0;f)-2\Theta (\infty;f)-\Theta (1;f)-2\Theta (0;g)\\&&-2\Theta (\infty;g)-2\Theta (1;g)+\varepsilon )T(r)+S(r).\nonumber\eea
Combining (\ref{e3.1b}) and (\ref{e3.1bb}) we see that
\bea\label{e3.1bbb}&&(n+m-10+2\Theta (0;f)+2\Theta (\infty;f)+\Theta (1;f)+2\Theta (0;g)\\&&+2\Theta (\infty;g)+\Theta (1;g)+\min\{\Theta (1;f),\Theta (1;g)\}-\varepsilon )T(r) \leq S(r).\nonumber\eea
Clearly, (\ref{e3.1bbb}) leads to a contradiction for $\varepsilon > 0$.\\
{\bf Subcase 1.2.} $l=0$.
Using {\it Lemma \ref{l6}} we note that
\bea \label{e3.1ee}& &\ol N_{0}(r,0;g^{'})+\ol N^{(2}_{E}(r,0;F)+2\ol N_{L}(r,0;G)+2\ol N_{L}(r,0;F)\\&\leq & \ol N_{0}(r,0;g^{'})+\ol N^{(2}_{E}(r,0;G)+\ol N_{L}(r,0;G)+\ol N_{L}(r,0;G)+2\ol N_{L}(r,0;F)\nonumber \\&\leq& \ol N_{0}(r,0;g^{'})+\ol N(r,0;G \mid\geq 2)+\ol N_{L}(r,0;G)+2\ol N_{L}(r,0;F)\nonumber\\&\leq& N(r,0;g^{'}\mid g\not=0)+\ol N(r,0;G \mid\geq 2)+2\ol N(r,0;F \mid\geq 2)\nonumber\\&\leq &\ol N(r,0;g)+\ol N(r,\infty;g)+\ol N(r,0;g)+\ol N(r,\infty;g)\nonumber\\ & &+2\ol N(r,0;f)+2\ol N(r,\infty;f)+S(r,f)+S(r,g).\nonumber\eea
Hence using (\ref{e3.1ee}), {\it Lemmas \ref{l1}}, {\it \ref{l2}} and {\it \ref{l3}} we get from second fundamental theorem for $\varepsilon >0$ that
\bea \label{e3.1c}\;\;\;\;& & (n+m+2)T(r,f)\\ &\leq & \ol N(r,\infty;f)+\ol N(r,0;f)+\ol N(r,1;f)+ N^{1)}_{E}(r,0;F)+\ol N_{L}(r,0;F)+\ol N_{L}(r,0;G)\nonumber\\& &+\ol N^{(2}_{E}(r,0;F)-N_{0}(r,0;f^{'})+S(r,f)\nonumber\\&\leq & 2\{\ol N(r,0;f)+\ol N(r,1;f)\}+\ol N(r,0;g)+\ol N(r,1;g)+ 2\ol N(r,\infty;f) + \ol N(r,\infty; g)\nonumber\\& &+\ol N^{(2}_{E}(r,0;F) +2\ol N_{L}(r,0;G)+2\ol N_{L}(r,0;F)+\ol N_{0}(r,0;g^{'})+S(r,f)+S(r,g)\nonumber \\ &\leq & (17-2\Theta (0;f)-2\Theta (\infty;f)-2\Theta (1;f)-2\Theta (0;g)\nonumber-2\Theta (\infty;g)\\&&-\Theta (1;g)+\varepsilon )T(r)+S(r)\nonumber.\eea
In a similar manner we can obtain \bea \label{e3.1cc} (n+m+2)T(r,g) &\leq& (17-2\Theta (0;f)-2\Theta (\infty;f)-\Theta (1;f)-2\Theta (0;g)\\&&-2\Theta (\infty;g)-2\Theta (1;g)+\varepsilon )T(r)+S(r).\nonumber\eea
Combining (\ref{e3.1c}) and (\ref{e3.1cc}) we see that
\bea \label{e3.1ccc}&&(n+m-15+2\Theta (0;f)+2\Theta (\infty;f)+\Theta (1;f)+2\Theta (0;g)\\&&+2\Theta (\infty;g)+\Theta (1;g)+\min\{\Theta (1;f),\Theta (1;g)\}-\varepsilon )T(r) \leq S(r).\nonumber\eea
Since $\varepsilon >0$, be arbitrary (\ref{e3.1ccc}) leads to a contradiction.\\
{\bf Case 2.} $H\equiv 0$. On integration we get from (\ref{e2.1}) \be\label{e3.4}\frac{1}{F}\equiv\frac{A}{G}+B,\ee where $A$, $B$ are constants and $A\not=0$.
From {\it Lemma \ref{l1}} we get \be\label{e3.5}T(r,f)=T(r,g)+S(r,g).\ee
{\bf Subcase 2.1.} First suppose that $B\not =0$.
From (\ref{e3.4}) we have $$\ol N(r,\infty;f)=\ol N(r,\frac{-A}{\;B};G).$$
{\bf Subcase 2.1.1.} Let $\frac{-A}{\;B}\not =1$.\\
If $\frac{-A}{\;B}\not =Q(1)+1$, then in view of (\ref{e3.5}), from the second fundamental theorem we get
\beas &&(n+2m+1)T(r,g)\\&\leq& \ol N(r,\infty;g)+\ol N\left(r,1;G\right)+\ol N\left(r,\frac{-A}{\;B};G\right)+S(r,g)\\&=&\ol N(r,\infty;g)+\ol N(r,0;g)+mT(r,g)+\ol N(r,\infty;f)+S(r,g)\\&\leq &(m+2)T(r,g)+\ol N(r,\infty;f)\}+S(r,g)\\&\leq& (m+3)T(r,g)+S(r,g),\eeas which  is a contradiction for $n \geq 3$ and $m\geq 3$.\\
\par
Next Suppose $\frac{-A}{\;B}=Q(1)+1$, from (\ref {e3.4}) we have
\bea\label {e3.5a}\frac{G}{BF}=G-P(1)=G+\frac{A}{B}=(g-1)^{m+1}(g-\alpha^{'} _{1})(g-\alpha^{'} _{2})\ldots (g-\alpha^{'} _{n}),\eea where $\alpha^{'} _{i}$, $i=1,2,\ldots,n$ are the distinct simple zeros of $P(z)+\frac{A}{B}$.
As $B\not= 0$, $f$ and $g$ do not have any common pole. Let $z_{0}$ be a zero of $g-1$ of multiplicity p(say) then it must be a pole of $f$ with multiplicity $q\geq 1$ (say). So from (\ref{e3.5a}) we have $$(m+1)p=(n+m+1)q\geq m+n+1.$$ i.e., $$p\geq \frac{n+m+1}{m+1}>1.$$
 Next suppose $z_{i}$ be a zero of $g-\alpha^{'} _{i}$ of multiplicity $p_{i}$, then in view of (\ref{e3.5a}), we have $z_{i}$ be a pole of $f$ of multiplicity $q_{i}$, (say) such that
$$p_{i}=(n+m+1)q_{i}\geq n+m+1.$$ Let $\beta _{j}$, $j=1,2,\ldots,m$ be the distinct simple zeros of $P(z)-1$. Now from the second fundamental theorem we get
\beas &&(n+m+1)T(r,g)\\&\leq& \ol N(r,\infty;g)+\ol N\left(r,1;G\right)+\ol N\left(r,\frac{-A}{\;B};G\right)+S(r,g)\\&\leq& \ol N(r,\infty;g)+\ol N(r,0;g)+\sum\limits_{j=1}^{m}\ol N(r,\beta _{j};g)+\ol N(r,1;g)+\sum\limits_{i=1}^{n}\ol N(r,\alpha^{'} _{i};g)+S(r,g)\\&\leq& \left(m+2+\frac{1}{2}+\frac{n}{n+m+1}\right)T(r,g)+S(r,g),\eeas which  is a contradiction for $n\geq 3$, $m\geq 3$.\\
{\bf Subcase 2.1.2.} Next let $\frac{-A}{\;B}=1$. From (\ref{e3.4}) we have $$\frac{1}{F}=\frac{B(G-1)}{G}.$$ Therefore in view of (\ref{e3.5}), second fundamental theorem yields
\beas&& T(r,g)+S(r,g)\geq \ol N(r,\infty;f)=\ol N(r,1;G)=\ol N(r,0;g)+\sum\limits_{j=1}^{m}\ol N(r,\beta _{j};g)\\&&\geq (m-1)T(r,g)+S(r,g),\eeas
a contradiction as $m\geq 3$.\\
{\bf Subcase 2.2.} $B=0$. From (\ref{e3.4}) we get \be\label{e3.6} AF\equiv G.\ee
{\bf Subcase 2.2.1.} Suppose $A\not =1$. \\
{\bf Subcase 2.2.1.1.} Let $A=P(1)$, then from (\ref{e3.6}) we have $$P(1)\left(F-\frac{1}{P(1)}\right)\equiv G-1.$$
As $P(1)\not =1$ and {\it Lemma \ref{l4}} implies $P(1)\not =-1$, we have $\frac{1}{P(1)}\not=P(1)$, it follows that $P(z)-\frac{1}{P(1)}$ has simple zeros. Let they be given by $\gamma _{i}$, $i=1,2,\ldots, n+m+1$.
So from the second fundamental theorem and (\ref{e3.5}) we get
\beas(n+m-1)T(r,f)&\leq& \sum\limits_{i=1}^{n+m+1}\ol N(r,\gamma _{i};f)+S(r,f)\\&\leq& \ol N(r,0;g)+\sum\limits_{j=1}^{m}\ol N(r,\beta _{j};g)\leq (m+1) T(r,f)+S(r,f),\eeas a contradiction since $n\geq 3$.\\
{\bf Subcase 2.2.1.2.} Let $A\not =P(1)$.\\
Then we have from (\ref{e3.6}) $$A(F-1)\equiv G-A.$$
Let the distinct zeros of $P(z)-A$ be given by $\delta _{i}$, $i=1,2,\ldots, n+m+1$.
 So from the second fundamental theorem and (\ref{e3.5}) we get
 \beas(n+m-1)T(r,g)&\leq& \sum\limits_{i=1}^{n+m+1}\ol N(r,\delta _{i};g)+S(r,g)\\&=& \sum\limits_{j=1}^{m}\ol N(r,\beta _{j};f)+\ol N(r,0;f)+S(r,f)\\&\leq& (m+1)T(r,g)+S(r,g),\eeas a contradiction since $n\geq 3$.\\
{\bf Subcase 2.2.2.} Suppose $A=1$. Then from (\ref{e3.6}) we have $F\equiv G$. i.e., $P(f)\equiv P(g)$. Here $k=2$, $d_{1}=0$, $d_{2}=1$, $q_{1}=n$, $q_{2}=m$. Since $\min \{q_{1},q_{2}\}=\min \{n,m \}\geq 2$ and $n+m\geq 5$
we see that $nm>n+m$. So from {\it Theorem B} we conclude that $P(z)$ is an uniqueness polynomial. Therefore $f\equiv g$. This proves the theorem. \end{proof}
\begin{center} {\bf Acknowledgement} \end{center}
The authors wish to thank the referee for his/her valuable remarks and suggestions to-wards the improvement of the paper.


\begin{thebibliography}{99}
\bibitem {1} T.C.Alzahary, Meromorphic functions with weighted sharing of one set, Kyungpook Math.J., 47 (2007), 57-68.
\bibitem {1a} T.T.H.An, J.T.Wang and P. Wong, Strong uniqueness polynomials :The complex Case, Complex Var. Theory Appl., 49(1) (2004), 25-54.
\bibitem{1b} A. Banerjee and I. Lahiri, A uniqueness polynomial generating a unique range set and vise versa, Comput. Method Funct. Theory, 12(2)(2012), 527-539.
\bibitem{1c} A. Banerjee and S. Majumder, Meromorphic functions with deficiencies generating unique range sets, J. Cont. Math. Anal., 48(6)(2013), 310-321.
\bibitem{1d} A. Banerjee and S. Majumder On unique range set of meromorphic functions with deficient poles, Facta Univ. (Ser. Math. Inf.) 25(1) (2013), 1-15.
\bibitem{1e} S.Bartels, Meromorphic functions sharing a set with 17 elements ignoring multiplicities, Complex Var. Theory Appl., 39(1998), 85-92.
\bibitem{2} H. Fujimoto, On uniqueness of meromorphic functions sharing finite sets, Amer. J. Math., 122 (2000), 1175-1203.
\bibitem{2a} H. Fujimoto, On uniqueness polynomials for meromorphic functions, Nagoya Math. J., 170 (2003), 33-46.
\bibitem{3} G. Frank and M. Reinders, A unique range set for meromorphic functions with 11 elements. Complex Var. Theory Appl. 37 (1)(1998), 185-193.
\bibitem{4} F.Gross, Factorization of meromorphic functions and some open problems, Proc. Conf. Univ. Kentucky, Leixngton, Kentucky(1976); Lecture Notes in Math., 599(1977), 51-69, Springer(Berlin).
\bibitem{5} F.Gross and C.C.Yang, On preimage and range sets of meromorphic functions, Proc. Japan Acad., 58 (1982), 17-20.
\bibitem{6} W.K.Hayman, Meromorphic Functions, The Clarendon Press, Oxford (1964).
\bibitem{7} I.Lahiri, Value distribution of certain differential polynomials. Int. J. Math. Math. Sci., 28(2)(2001), 83-91.
\bibitem{8} \bysame, Weighted sharing and uniqueness of meromorphic functions, Nagoya Math. J., 161(2001), 193-206.
\bibitem{9} \bysame, Weighted value sharing and uniqueness of meromorphic functions, Complex Variables, 46(2001), 241-253.
\bibitem{9a} \bysame, A question of gross and weighted sharing of a finite set by meromorphic functions, Applied Math. E-Notes, 2(2002), 16-21
\bibitem{9b} \bysame and A.Banerjee, Uniqueness of meromorphic functions with deficient poles, Kyungpook Math. J., 44(2004), 575-584.

\bibitem{10} \bysame and S.Dewan, Value distribution of the product of a meromorphic function and its derivative, Kodai Math. J., 26 (2003), 95-100.
\bibitem{11} A.Z.Mohon'ko, On the Nevanlinna characteristics of some meromorphic functions, Theory of Functions. Funct. Anal. Appl., 14 (1971), 83-87.
\bibitem{12} P.Li and C.C.Yang, Some further results on the unique range sets of meromorphic functions, Kodai Math. J., 18(1995), 437-450.
\bibitem{14} H.X.Yi, On a problem of Gross, Sci. China, Ser.A, 24 (1994), 1137-1144.
\bibitem{15} \bysame, Unicity theorems for meromorphic or entire functions III, Bull. Austral. Math. Soc., 53(1996), 71-82.
\bibitem{15a} \bysame, The reduced unique range sets for entire or meromorphic functions, Complex Var., 32(1997), 191-198.
\bibitem{16} \bysame, Meromorphic functions that share one or two values II, Kodai Math. J., 22 (1999), 264-272.
\end{thebibliography}
\end{document}